\newtheorem{theorem}{Theorem}[section]
\newtheorem{corollary}[theorem]{Corollary}
\newtheorem{conjecture}[theorem]{Conjecture}
\newtheorem{observation}[theorem]{Observation}
\begin{document}
\title[Majority Dynamics and Internal Partitions]{Majority Dynamics and Internal Partitions of Random Regular Graphs: Experimental Results}
\author[P. Arkhipov]{Pavel Arkhipov}
\date{}

\begin{abstract}
    This paper focuses on Majority Dynamics in sparse graphs, in particular, as a tool to study internal cuts. It is known that, in Majority Dynamics on a finite graph, each vertex eventually either comes to a fixed state, or oscillates with period two. The empirical evidence acquired by simulations suggests that for random odd-regular graphs, approximately half of the vertices end up oscillating with high probability. We notice a local symmetry between oscillating and non-oscillating vertices, that potentially can explain why the fraction of the oscillating vertices is concentrated around $\frac{1}{2}$. In our simulations, we observe that the parts of random odd-regular graph under Majority Dynamics with high probability do not contain $\lceil \frac{d}{2} \rceil$-cores at any timestep, and thus, one cannot use Majority Dynamics to prove that internal cuts exist in odd-regular graphs almost surely. However, we suggest a modification of Majority Dynamics, that yields parts with desired cores with high probability.
\end{abstract}

\maketitle

\bigskip

\section{Introduction}

Let $G = (V, E)$ be an $n$-vertex graph. A partition of $G$ into sets $S$ and $V \setminus S = \overline{S}$ is an \textit{internal cut} if for every vertex the number of its neighbors in its own part is not less than the number of its neighbors in the other part. 

For a given partition, let us say that a vertex is \textit{unsatisfied} if the number of its neighbors in the opposite part is greater than the number of its neighbors in its own part and \textit{satisfied} otherwise. We will also call a set $S \subset V$ satisfied if all the vertices in $S$ are satisfied.

It is known that for $d = 3$, the only two graphs with no internal partition are $K_4$ and $K_{3, 3}$ \cite{Shafique_Dutton}. Also, for $d = 6$ any graphs with at least 12 vertices has an internal partition \cite{Ban_Linial}. It is conjectured that for all $d$ there is only a finite number of graphs with no internal cuts, however, the question remains open for $d \notin \{2, 3, 4, 6\}$.

If $d$ is even, then a random $d$-regular graph admits an internal partition with high probability \cite{even_d_whp}.

In \cite{partitioning_via_random_processes} it is shown that, for sufficiently large $d$-regular graphs, there are partitions such that almost every vertex is satisfied.

One of the tools that can be used to study internal partitions is Majority Dynamics. Encode a partition with a vector $x \in \{-1, 1\}^n$. We will call the corresponding parts of this cut \textit{positive} and \textit{negative}. Consider the following update rule:
\begin{equation}
    x_i^{new} = 
    \begin{cases}
        - 1, & \sum\limits_{j \sim i} x_j < 0 \\
        x_i, & \sum\limits_{j \sim i} x_j = 0 \\
        1, & \sum\limits_{j \sim i} x_j > 0 \\
    \end{cases}.
\end{equation}
In words, the $i$-th vertex switches parts if the strict majority of its neighbors are in another part of the cut. Starting from some initial configuration $x^0$, this update rule produces a sequence of states $(x^0, x^1, \ldots, x^t, \ldots)$.

It is known that such dynamics eventually converges to a cycle of length at most two \cite{period_2}, so there exists time $T$ such that $x^t = x^{t+2}$ for $t > T$. One may notice that if Majority Dynamics converges to a fixed point with non-trivial positive and negative parts, then this fixed point corresponds to an internal cut. 

It has been proved in \cite{md_paper} that if $G$ is a random $d$-regular graph or an Erdos-Renyi random graph with average degree $d$, then for every $\varepsilon > 0$ there exists a time $t$ such that, with high probability, $x^{t+2}_i = x^t_i$ for all $i$ except a set of size $\varepsilon n$, and the fraction of nodes for which $x^t_i = 1$ is, with high probability, in $[1/2 - \varepsilon, 1/2 + \varepsilon]$, if the initial cut was chosen uniformly at random.

\section{Oscillating vertices in Majority Dynamics}

Consider a final at-most-2-cycle of Majority Dynamics: $x^t = x^{t+2}$. In this limit cycle, a vertex $i$ either stays in a positive or negative part forever (so $x^t_i = x^{t+1}_i$), or oscillates between the parts ($x^t_i \neq x^{t+1}_i$). A gallery of random examples of final states for small regular graphs is given in Figure \ref{fig_gallery}. Oscillating vertices are highlighted with thick boundaries. As one can see, in some cases, the unanimity is reached, so the final state either every vertex is positive or every vertex is negative. Sometimes, every vertex in the graph oscillates. Sometimes, only a part of the graph oscillates.

\begin{figure}[!h]
    \centering
    \includegraphics[width=1.\textwidth]{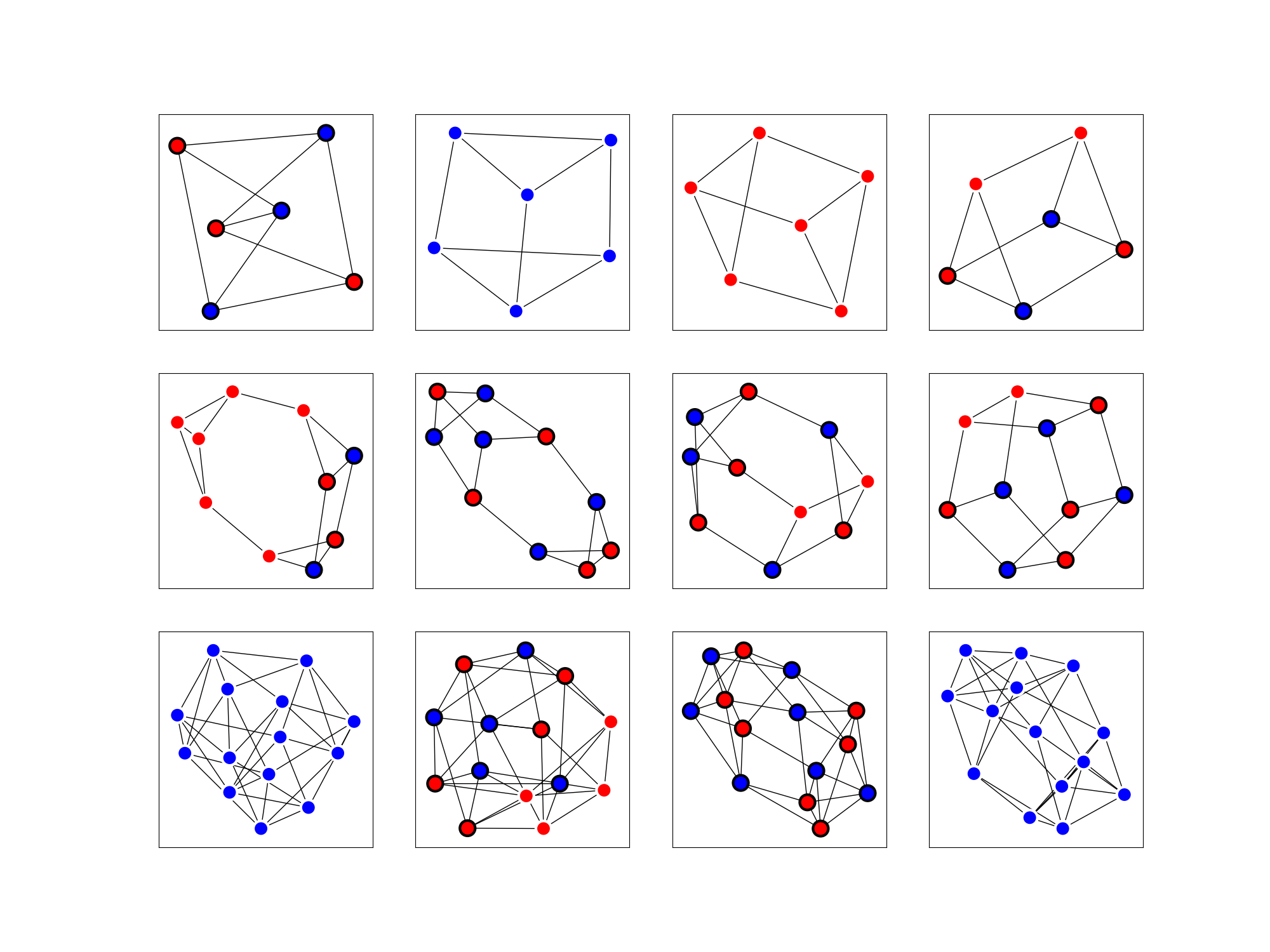}
    \caption{Examples of final configurations for some 3- and 5-regular graphs. Red vertices are in the positive part, blue vertices are in the negative part. Vertices with thick black boundary oscillate.}
    \label{fig_gallery}
\end{figure}

Let us consider a fraction of vertices that oscillate in the final configuration for $d$-regular graphs as $n \to \infty$. The histograms for the fraction of oscillating vertices are presented in Figure \ref{fig_histograms_regular}. As we can see, for odd $d$, the oscillating fraction gets concentrated around $\frac{1}{2}$, and for even $d$, it gets concentrated around zero.

The code used to generate all the data is available at GitHub, \\ \href{https://github.com/Paul566/internal-cuts}{https://github.com/Paul566/internal-cuts}.

\begin{figure}[!h]
    \centering
    \includegraphics[width=0.85\textwidth]{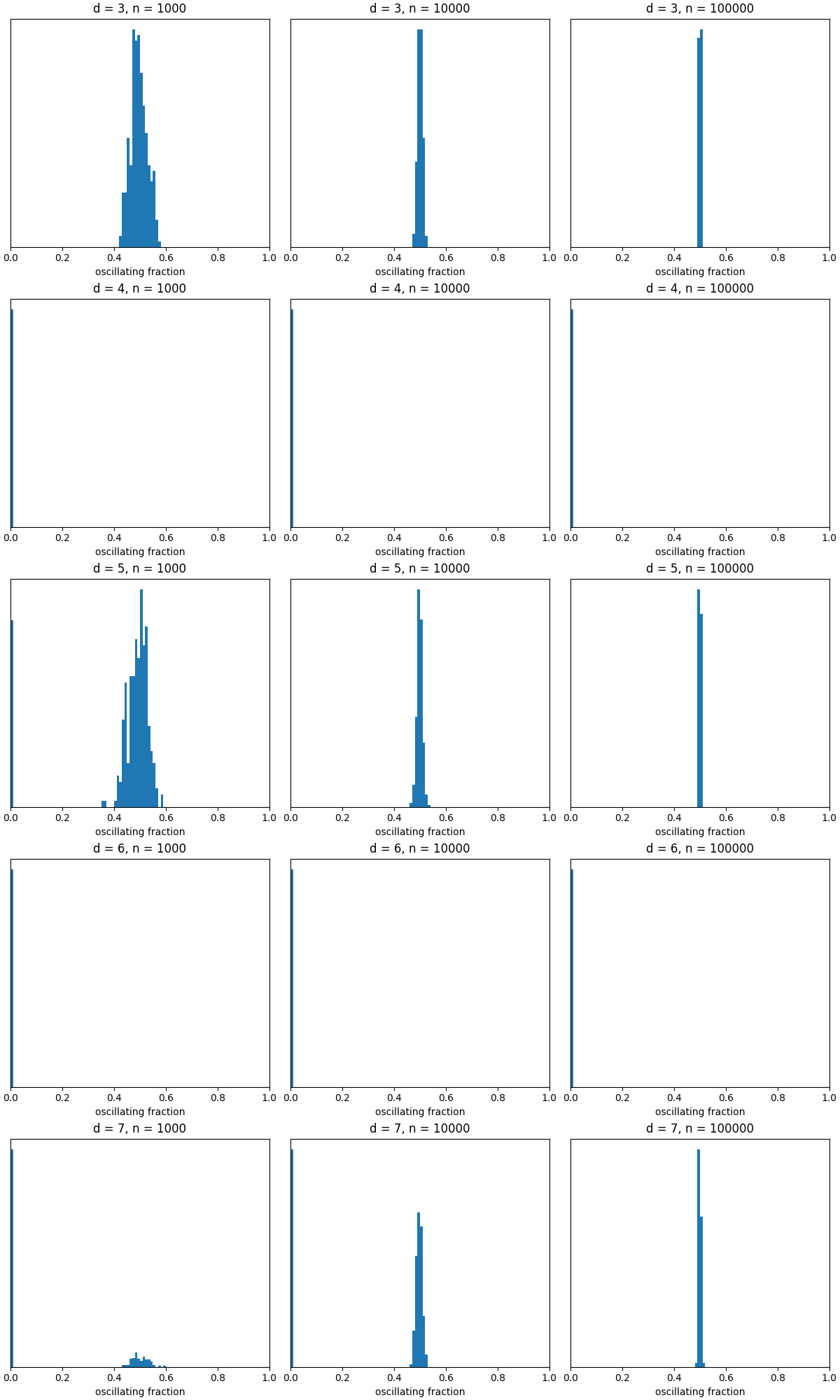}
    \caption{Histograms for the fraction of oscillating vertices for $d$-regular graphs. Cases for $d \in \{3, 4, 5, 6, 7\}$, $n \in \{10^3, 10^4, 10^5\}$.}
    \label{fig_histograms_regular}
\end{figure}

The numerical experiments suggest the following conjecture:
\begin{conjecture}
    In the limiting cycle of the Majority Dynamics for a random $d$-regular graph with a random initial configuration, the number of oscillating vertices is, with high probability, in $[(1/2 - \varepsilon) n, (1/2 + \varepsilon) n]$ if $d$ is odd, and in $[0, \varepsilon n]$ if $d$ is even.
    \label{conj_half_oscillates}
\end{conjecture}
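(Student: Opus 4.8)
The plan is to exploit a symmetry of Majority Dynamics that exchanges oscillating and fixed vertices, which forces the oscillating fraction to $1/2$ whenever the symmetry is available. Assume first that $d$ is odd, so that no ties ever occur, and suppose $G$ is bipartite with a proper $2$-colouring $\eta : V \to \{-1, 1\}$ (so $\eta_i \eta_j = -1$ for every edge $ij$). Given an MD trajectory $(x^0, x^1, \ldots)$, define a new trajectory by $\Psi(x)^t_i = \eta_i (-1)^t x^t_i$. A direct check using the update rule shows that $\Psi(x)$ is again an MD trajectory: the global time-dependent flip $x^t \mapsto (-1)^t x^t$ turns Majority Dynamics into minority dynamics, and the static bipartite flip $x_i \mapsto \eta_i x_i$ turns minority dynamics back into Majority Dynamics, the two operations being compatible precisely because neighbours receive opposite colours. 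Moreover $\Psi$ is an involution, and its key feature is that vertex $i$ is fixed in $\Psi(x)$ if and only if it oscillates in $x$, since $\Psi(x)^{t+1}_i = \Psi(x)^t_i$ is equivalent to $x^{t+1}_i = -x^t_i$.

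The initial configuration of $\Psi(x)$ is $(\eta_i x^0_i)_i$, which has the same uniform law as $x^0$. Hence, for a uniformly random start, $\Psi$ is a measure-preserving involution on trajectories that swaps the event ``$i$ oscillates'' with the event ``$i$ is fixed''. Because for odd $d$ every vertex is either fixed or oscillating, this yields $\mathbb{P}(i \text{ oscillates}) = 1/2$ exactly, and therefore $\mathbb{E}[\#\text{oscillating}] = n/2$ on any finite bipartite $d$-regular graph. This is the local symmetry referred to in the abstract. The obstruction to applying it to a random $d$-regular graph is that such graphs are not bipartite, so no global colouring $\eta$ exists and $\Psi$ is only defined locally. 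The natural route is the objective method: random $d$-regular graphs converge locally to the infinite $d$-regular tree $T_d$, which is bipartite, so $\Psi$ is globally well defined on $T_d$ and the root eventually oscillates with probability exactly $1/2$. I would then analyse MD on $T_d$ through the associated recursive distributional equation for the ``messages'' $m^t_{u \to v}$ (the state of $u$ computed from its $d - 1$ descendants), show that these message trajectories converge in distribution, and conclude via the symmetry above. Transferring this to the finite graph would yield the claimed concentration once combined with a second-moment or Azuma-type argument exploiting the approximate independence of the local neighbourhoods of two far-apart vertices.

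The main obstacle, and the reason this remains a conjecture, is that the limiting oscillation pattern is not a local function of the configuration: the convergence time $T$ to the final $2$-cycle may grow with $n$, so whether a given vertex oscillates can in principle depend on a neighbourhood of unbounded radius, and local weak convergence does not transfer such observables for free. One would need a quantitative stabilisation statement---that the oscillation status of a vertex is, with high probability, determined by its neighbourhood of some slowly growing radius $R(n)$---together with control of the frustration introduced by the odd cycles that destroy the exact bipartite symmetry.

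For even $d$ the picture changes because ties are possible: the computation above shows that $\Psi$ sends the ``keep on a tie'' rule to a ``flip on a tie'' rule, so the symmetry genuinely breaks. Here I would instead argue that ties act as an absorbing mechanism driving almost every vertex to a fixed state, so that the oscillating fraction tends to $0$; making this precise, e.g.\ via a Lyapunov/energy argument on the tree that shows a frozen vertex forces its neighbourhood to settle, is the corresponding difficulty in the even case.
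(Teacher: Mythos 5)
The statement you are addressing is presented in the paper as a \emph{conjecture}: the paper offers no proof, only simulation data plus two heuristics (a local red/blue edge-swap symmetry for odd degrees, and, for even $d$, the observation that the oscillating subgraph cannot have leaves). So there is no paper proof to measure your argument against, and your proposal, as you yourself admit, is not a proof either. What you do prove is correct, and it is a genuine sharpening of the paper's odd-$d$ heuristic. Your map $\Psi(x)^t_i = \eta_i(-1)^t x^t_i$ is exactly the paper's red/blue symmetry made global: on a bipartite graph, an edge $(i,j)$ satisfies $\Psi(x)^t_i = \Psi(x)^t_j$ if and only if $x^t_i \neq x^t_j$, so $\Psi$ swaps the ``red'' subgraph (endpoints in different parts) with the ``blue'' one at every step. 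Your algebra checks out: for odd $d$ there are no ties, $\Psi$ maps MD trajectories to MD trajectories, is an involution, preserves the uniform law of $x^0$, and exchanges ``$i$ oscillates'' with ``$i$ is fixed''. In fact you get slightly more than you claim: since the oscillating set of $\Psi(x)$ is the complement of the oscillating set of $x$, the \emph{distribution} of the number of oscillating vertices is symmetric about $n/2$ on any finite bipartite odd-regular graph, not just its expectation equal to $n/2$. This exact statement does not appear in the paper and cleanly isolates where the difficulty lives: non-bipartiteness.

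The gaps, however, are genuine, and they are precisely why the statement remains a conjecture. (i) Random $d$-regular graphs contain odd cycles with high probability, so no global $\eta$ exists and $\Psi$ is only defined locally; the paper's informal argument suffers from exactly the same limitation. (ii) Whether a given vertex oscillates in the limit cycle is not a bounded-radius observable --- the convergence time to the $2$-cycle may grow with $n$ --- so local weak convergence to the tree $T_d$ does not transfer this quantity; your proposal names this obstacle but does not overcome it, and no ``quantitative stabilisation'' lemma is supplied. (iii) Even granting the tree analysis via recursive distributional equations, you would obtain the probability for the root (i.e.\ the expectation), and the conjectured whp concentration requires a separate second-moment argument that is only gestured at. (iv) For even $d$ your sketch is actually weaker than the paper's own reasoning: the paper proves that any leaf of the subgraph induced by oscillating vertices must have odd degree, so for even $d$ that subgraph has no leaves, its components must contain cycles, and short cycles are locally rare in a random regular graph; this is more concrete than your suggestion that ties act as an absorbing mechanism. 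In summary: your bipartite involution is a correct and valuable partial result that goes beyond the paper's informal symmetry, but the conjecture itself is left untouched, for exactly the reasons you state.
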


We will now describe a symmetry in dynamics that makes the concentration of the oscillating fraction around $\frac{1}{2}$ believable.

For a given partition $x$, color an edge $(i, j)$ red if $x_i \neq x_j$, and blue if $x_i = x_j$. Let's describe the dynamics in terms of the red and blue subgraphs instead of the positive and negative parts. We will denote the degree of a vertex $i$ in the red and blue subgraphs as $\deg_R(i)$ and $\deg_B(i)$ respectively. Such a coloring of the edges of $G$ encodes the current partition up to a symmetry of switching signs of $x_i$ in some connected components of $G$. A given vertex $i$ is unsatisfied if and only if the strict majority of the edges adjacent to it are red. During a step of Majority Dynamics, an edge $(i, j)$ changes its color if and only if exactly one of the vertices $i, j$ is unsatisfied. So, the update rule for the red and blue subgraphs are the following:

\begin{itemize}
    \item Edge (i, j) changes its color if either ($\deg_R(i) > \deg(i) / 2$ and $\deg_R(j) \leq \deg(j) / 2$) or ($\deg_R(i) \leq \deg(i) / 2$ and $\deg_R(j) > \deg(j) / 2$),

    \item Edge (i, j) stays the same color if either ($\deg_R(i) > \deg(i) / 2$ and $\deg_R(j) > \deg(j) / 2$) or ($\deg_R(i) \leq \deg(i) / 2$ and $\deg_R(j) \leq \deg(j) / 2$).
\end{itemize}

\begin{observation}
    If $\deg(i)$ and $\deg(j)$ are odd, then the update rules are symmetrical with respect to switching red and blue colors for all edges. 
\end{observation}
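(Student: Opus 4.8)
The plan is to recast the per-vertex rule in the red/blue language and then track what a global color swap does to it. Write $\sigma$ for the involution that recolors \emph{every} edge (red becomes blue and blue becomes red); on the degree counts it acts by $\deg_R(i) \mapsto \deg_B(i) = \deg(i) - \deg_R(i)$. The elementary fact I would isolate first is that, when $\deg(i)$ is odd, the half-integer $\deg(i)/2$ is never attained by the integer $\deg_R(i)$, so there is no tie: exactly one of $\deg_R(i) > \deg(i)/2$ and $\deg_R(i) < \deg(i)/2$ holds, and moreover the event $\deg_R(i) \le \deg(i)/2$ coincides with $\deg_R(i) < \deg(i)/2$. Consequently $\sigma$ carries the event ``$i$ is unsatisfied'' (i.e.\ $\deg_R(i) > \deg(i)/2$) exactly onto its complement ``$i$ is satisfied'', so $\sigma$ \emph{flips the satisfaction status of every odd-degree vertex}.

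Second, I would feed this into the edge rule. As noted just above the statement, an edge $(i,j)$ flips color in one step iff exactly one of its endpoints is unsatisfied. If both $\deg(i)$ and $\deg(j)$ are odd, then $\sigma$ flips \emph{both} endpoint statuses, and flipping both endpoints leaves the parity of the number of unsatisfied endpoints among $\{i,j\}$ unchanged, hence preserves the condition ``exactly one of $i,j$ is unsatisfied''. Therefore the flip-set $S(C) = \{\,(i,j): \text{exactly one endpoint unsatisfied}\,\}$ satisfies $S(\sigma(C)) = S(C)$. In terms of the two displayed bullets this says precisely that, under $\sigma$, the first disjunct of the ``changes color'' condition is exchanged with the second while their disjunction, and likewise the ``stays the same'' condition, are left invariant --- which is the asserted symmetry.

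To state it cleanly I would package it as a commutation. Letting $F$ denote one step of the color dynamics, I would verify $F(\sigma(C)) = \sigma(F(C))$ on each edge by the two cases (edge in $S$, or not), using only that $\sigma$ and ``flip this single edge'' are the same involution red/blue; in both cases the two sides collapse to the same color immediately because $S$ is unchanged. The one point that genuinely needs the hypothesis, and which I would flag as the crux, is the no-tie fact: for even $\deg(i)$ a vertex with $\deg_R(i) = \deg(i)/2$ is satisfied both before and after $\sigma$, so its status is \emph{not} flipped, the parity argument fails, and the symmetry breaks --- consistent with the even-$d$ histograms concentrating at $0$ rather than $\tfrac12$. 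I would finally remark that $\sigma$ need not send a partition-induced coloring to another partition-induced one (already on a single odd cycle it does not), so this symmetry explains but does not by itself establish Conjecture~\ref{conj_half_oscillates}.
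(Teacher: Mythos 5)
Your proof is correct and follows essentially the same route as the paper's: the key point in both is that for odd degrees there are no ties, so under the red/blue swap the condition ``majority red'' becomes exactly its complement, and hence the condition ``exactly one endpoint unsatisfied'' governing edge flips is invariant. Your write-up merely formalizes this more explicitly (packaging the symmetry as the commutation $F \circ \sigma = \sigma \circ F$ and flagging where oddness is used), which is a fuller but not a different argument.
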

\begin{proof}
    Each edge is either red or blue, so $\deg_R(i) + \deg_B(j) = \deg(j)$. Then, the condition of exactly one of $i, j$ having the majority of edges red is equivalent to the condition of exactly one of $i, j$ having the majority of edges blue.
\end{proof}

There is the following intuition behind the odd degree case in Conjecture \ref{conj_half_oscillates}.

In general, globally, the red and blue subgraphs are different, because the red subgraph has to be bipartite, and the blue subgraph has to be disconnected unless the red subgraph is empty. However, locally a random $d$-regular graph looks like a tree with high probability. Any subgraph of a tree is bipartite, and its complement is disconnected unless it is empty. Therefore, with high probability, one cannot distinguish the red subgraph and the blue subgraph by looking at them in a neighborhood of some vertex. The oscillating vertices are the vertices with the majority of the neighboring edges red, and the non-oscillating vertices are the vertices with the majority of the neighboring edges blue. Since the degrees are odd, there cannot be any ties. Since the red and the blue subgraphs behave symmetrically during the Majority Dynamics, then around half of the vertices should be unsatisfied at any time step.

What happens if some of the vertices have even degrees?

\begin{observation}
    In the limit cycle, each leaf of the subgraph generated by vertices that oscillate has odd degree.
\end{observation}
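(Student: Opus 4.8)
The plan is to translate the oscillation condition into a statement about the red/blue edge dynamics introduced above, and then to extract a parity constraint from the fact that a leaf of the oscillating subgraph is unsatisfied at two consecutive time steps. First I would record the basic dictionary between oscillation and dissatisfaction in the limit cycle. If a vertex $i$ oscillates, then $x^t_i \neq x^{t+1}_i$ and, since $x^{t+2} = x^t$, also $x^{t+1}_i \neq x^{t+2}_i$; each of these flips means that $i$ switches sign, which by the update rule happens exactly when $i$ is unsatisfied. Hence an oscillating vertex satisfies $\deg_R(i) > \deg(i)/2$ at \emph{both} time $t$ and time $t+1$. Symmetrically, a fixed vertex is satisfied at both steps.

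Next I would track how the color of each edge incident to a leaf evolves over one step. Let $v$ be a leaf of the subgraph induced by the oscillating vertices, so $v$ oscillates and has exactly one oscillating neighbor $u$, while its remaining $\deg(v) - 1$ neighbors are fixed. By the update rules stated above, an edge changes color precisely when exactly one of its endpoints is unsatisfied. The edge $(v, u)$ joins two oscillating (hence unsatisfied) vertices, so it keeps its color throughout the cycle; each of the $\deg(v) - 1$ edges from $v$ to a fixed neighbor joins an unsatisfied to a satisfied vertex, so it flips color at every step. Writing $r \in \{0, 1\}$ for the indicator that $(v,u)$ is red and $k$ for the number of red edges from $v$ to fixed neighbors at time $t$, I get that the red-degree of $v$ equals $r + k$ at time $t$ and $r + (\deg(v) - 1 - k)$ at time $t+1$, since the $\deg(v)-1$ flipping edges switch synchronously and so contribute the complementary count at the next step.

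Finally I would combine dissatisfaction at both steps with integrality. Adding the two red-degrees gives $(r + k) + (r + \deg(v) - 1 - k) = 2r + \deg(v) - 1$, while the condition that $v$ is unsatisfied at both times forces each summand to exceed $\deg(v)/2$, hence their sum to exceed $\deg(v)$; this yields $2r > 1$, so $r = 1$ and the sum equals $\deg(v) + 1$. If $\deg(v)$ were even, say $\deg(v) = 2m$, then each red-degree would be an integer strictly greater than $m$, hence at least $m+1$, so their sum would be at least $2m + 2 > \deg(v) + 1$, a contradiction. Therefore $\deg(v)$ is odd.

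I do not expect a real obstacle here; the only place demanding care is the first step, where one must verify that an oscillating vertex is unsatisfied at \emph{both} steps of the cycle rather than only at one, and that the $\deg(v) - 1$ edges to fixed neighbors all flip in the same step so that their red-count at time $t+1$ is exactly the complementary $\deg(v) - 1 - k$. Both follow directly from the edge-color update description already given, after which the statement reduces to the short counting and parity argument above.
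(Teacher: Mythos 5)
Your proof is correct and follows essentially the same route as the paper's: both translate oscillation into being unsatisfied at consecutive steps, note that the edge to the unique oscillating neighbor keeps its color while all edges to fixed neighbors flip, and derive a contradiction for even degree. The only cosmetic difference is that you sum the two dissatisfaction inequalities (which also yields $r=1$ as a byproduct), whereas the paper splits into two cases according to the color of the edge $(v,u)$.
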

\begin{proof}
    Assume the contrary. Suppose, a vertex $i$ of even degree $d$ is oscillating, it has 1 oscillating neighbor $j$ and $d - 1$ non-oscillating neighbors. Since $i$ oscillates, at each sufficiently large time $t$ it has at least $\frac{d}{2} + 1$ red adjacent edges. There are two cases.
    \begin{enumerate}
        \item The edge $(i, j)$ is red. Then there are at least $\frac{d}{2}$ edges (out of $d - 1$) that lead from $i$ to non-oscillating vertices. At the next step, the colors of those edges will switch, so $i$ will have at most $d - 1 - \frac{d}{2} + 1 = \frac{d}{2}$ red edges adjacent in total, and $i$ will stop oscillating, which is a contradiction.

        \item The edge $(i, j)$ is blue. Then there are at least $\frac{d}{2} + 1$ edges that lead from $i$ to non-oscillating vertices. At the next step, $i$ will have at most $d - 1 - \frac{d}{2} = \frac{d}{2} - 1$ red edges adjacent in total, and $i$ will cease to oscillate, which is a contradiction.
    \end{enumerate}
\end{proof}

\begin{corollary}
    If $d$ is even, then the subgraph of any $d$-regular graph generated by oscillating vertices does not have leaves.
\end{corollary}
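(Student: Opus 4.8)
The plan is to obtain the Corollary as an immediate consequence of the preceding Observation, so essentially no new work is required beyond a careful reading of the degree convention. The key point is that, in the Observation, the phrase \emph{odd degree} refers to the degree of the vertex in the ambient graph $G$, not to its degree inside the oscillating subgraph. Indeed, the proof of the Observation argues about a vertex $i$ whose total degree in $G$ equals some even number $d$, while $i$ is a \emph{leaf} of the oscillating subgraph in the sense that it has exactly one oscillating neighbor among its $d$ neighbors in $G$. Thus the Observation can be restated contrapositively: in the limit cycle, no vertex of even degree in $G$ can be a leaf of the oscillating subgraph.

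Given this reformulation, I would argue as follows. Suppose $G$ is $d$-regular with $d$ even, and consider the subgraph induced by the oscillating vertices in the limit cycle. Every vertex of $G$ has degree exactly $d$, which is even by hypothesis. By the Observation in its contrapositive form, a vertex of even degree in $G$ cannot be a leaf of the oscillating subgraph. Hence no vertex of $G$ can be such a leaf, and therefore the oscillating subgraph has no leaves at all, which is precisely the claim.

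Since this is a direct corollary, there is no genuine obstacle to overcome; the only point that merits care is making the degree convention explicit, so that \emph{leaf} (a vertex of degree one inside the induced oscillating subgraph) is not conflated with \emph{degree} (the value $d$ coming from $d$-regularity of $G$). I would also note that the statement implicitly concerns the limit cycle, inheriting that hypothesis from the Observation on which it relies, and that regularity is used only to guarantee that \emph{every} vertex has the same even degree $d$, so that the even-degree exclusion applies uniformly.
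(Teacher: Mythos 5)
Your proposal is correct and matches the paper's intended argument: the corollary is an immediate consequence of the preceding Observation, since in a $d$-regular graph with $d$ even every vertex has even degree in $G$, so none can be a leaf of the oscillating subgraph. Your clarification that ``odd degree'' in the Observation refers to degree in the ambient graph $G$ (not in the induced oscillating subgraph) is exactly the right reading and is consistent with how the Observation's proof treats the vertex $i$.
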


Then, a component of the oscillating subgraph in an even-regular graph has to contain cycles. Since a random regular graphs is locally a tree with high probability, a given vertex is almost surely not inside a component of the oscillating subgraph of bounded diameter.

In addition, let us consider the dynamics on the Erdos-Renyi graphs $G(n, d / n)$. The histograms for the fraction of oscillating vertices is in Figure \ref{fig_histograms_er}. It looks like for relatively small $n$, there are common cases of nearly empty oscillating part, but for large $n$, those cases are unlikely.


\begin{figure}[h]
    \centering
    \includegraphics[width=1.\textwidth]{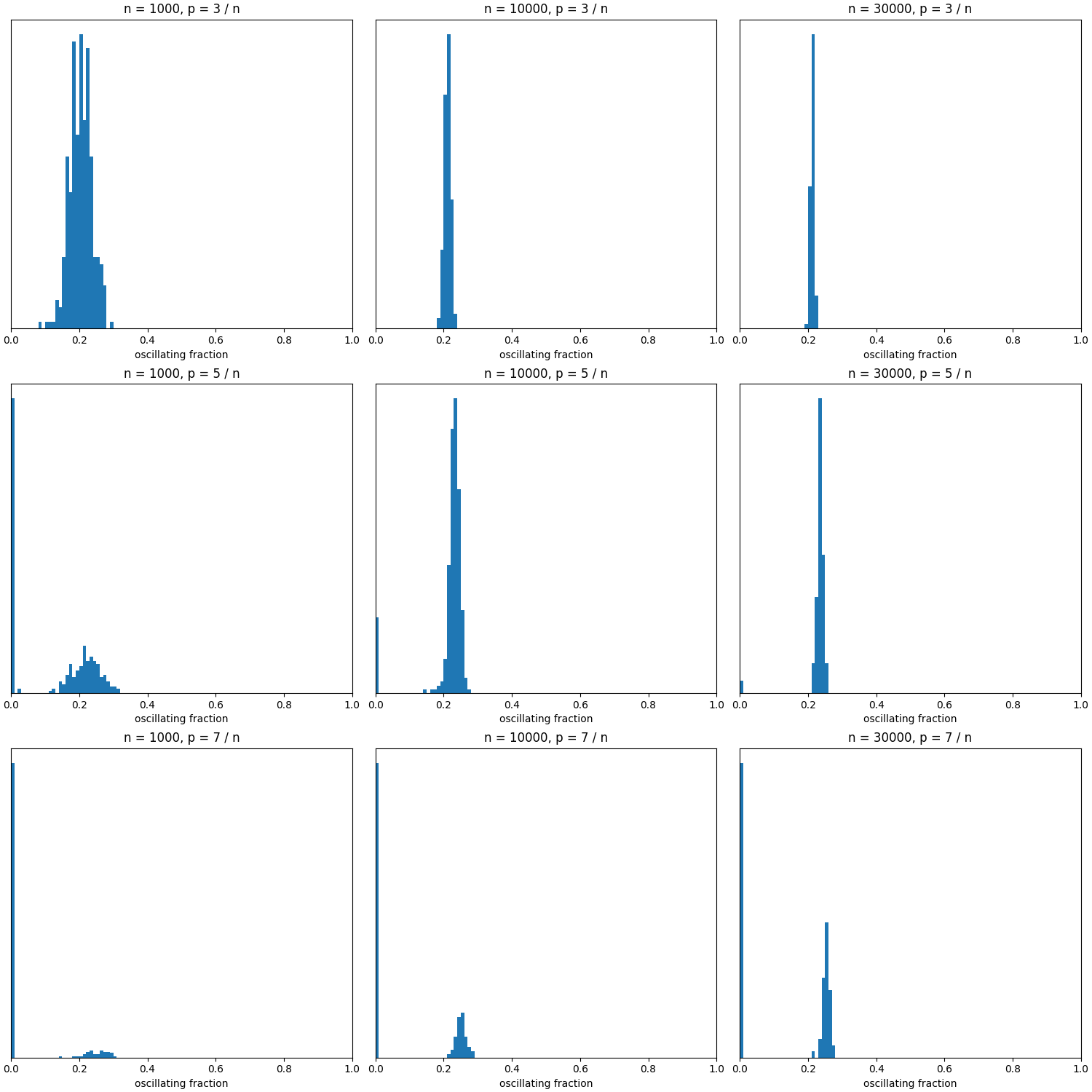}
    \caption{Histograms for the fraction of oscillating vertices for $G(n, d / n)$. Cases for $d \in \{3, 5, 7\}$, $n \in \{10^3, 10^4, 3 \cdot 10^4\}$.}
    \label{fig_histograms_er}
\end{figure}

\section{Majority Dynamics and Majority Dynamics with zeros for studying internal partitions}


We now consider the applications of Majority Dynamics in studying internal cuts of regular graphs. Let us say that a subgraph of a $d$-regular graph is a core if all vertices in this subgraph have degrees at least $\frac{d}{2}$. Clearly, if a graph has two disjoint cores, then there is an internal cut in such a graph. 

Since the Majority Dynamics is designed in a way that unsatisfied vertices go to the side where they (probably) be satisfied, there is a natural hope that at some point of Majority Dynamics cores will appear in positive and negative parts. The probability that a core emerges in the positive part at any point is illustrated in the Figure \ref{fig_positive_core}. In the simulations, we generate a random $d$-regular graph, then we generate the initial configuration uniformly at random and run Majority Dynamics until it converges to a cycle of length at most 2. Then we keep looking for unsatisfied vertices in the positive part of the cut and include them in the negative part one by one. The positive part is non-empty when this process stops if and only if there is a core in the positive part of the cut.
\begin{figure}[h]
    \centering
    \includegraphics[width=0.9\textwidth]{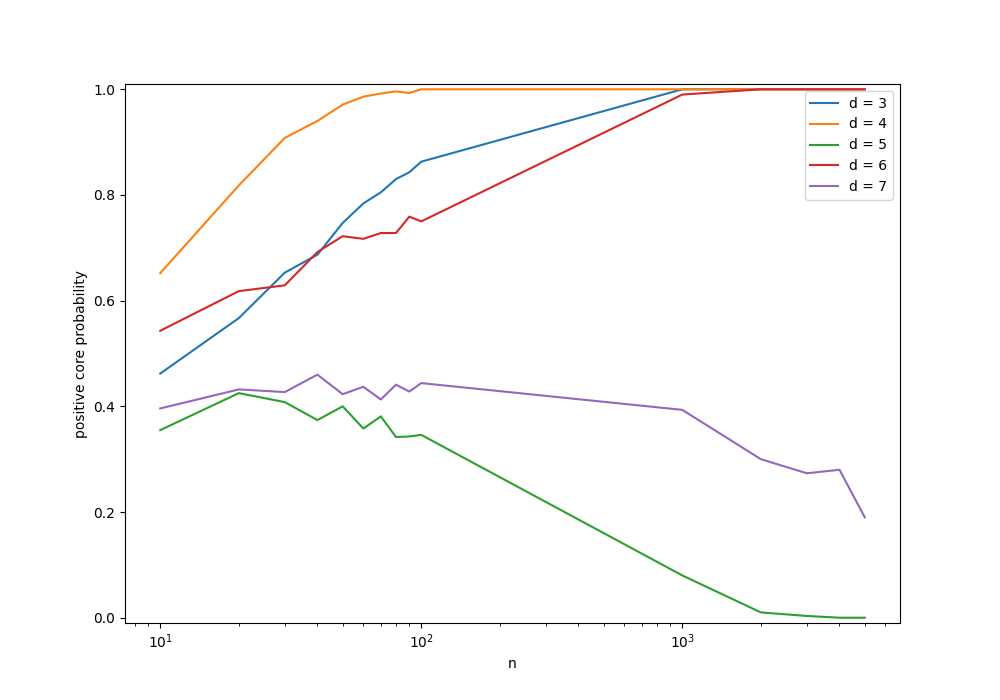}
    \caption{Probability of positive core after reaching the limit cycle of Majority Dynamics.}
    \label{fig_positive_core}
\end{figure}

Notice that it is enough to check if such a core exists when Majority Dynamics converged to a cycle of length at most two, since a core cannot disappear once it emerged. If $d$ is even or $d = 3$, the probability tends to 1 as $n$ goes to infinity, and for odd $d$ greater than 3, the probability tends to 0. The case $d = 3$ is unique, because then any cycle is a core, and a positive core exists with high probability right after the initialization. 

The data suggests that trying to find cores in positive and negative parts during Majority Dynamics should not yield new results of the form ``For this specific $d$, random $d$-regular graphs have an internal cut with high probability'', as such facts are already established for even $d$ and $d = 3$.


If Majority Dynamics cannot create an internal cut with high probability for odd $d > 3$, then what kind of dynamics can? We consider two possible answers.

First, it makes sense to look at the swap process, which picks a random unsatisfied vertex and makes it switch parts. It is different from Majority Dynamics, which made all unsatisfied vertices switch parts simultaneously at each step. Unlike Majority Dynamics, the swap process converges to a fixed point, because the size of the cut strictly decreases at each step. Therefore, if the swap process terminates at a non-trivial partition, such partition has to be internal. The plot of the probability that the swap process converges to an internal partition starting from a random partition for a random $d$-regular graph is in Figure \ref{fig_internal_cut_1_by_1}.

\begin{figure}[h]
    \centering
    \includegraphics[width=0.9\textwidth]{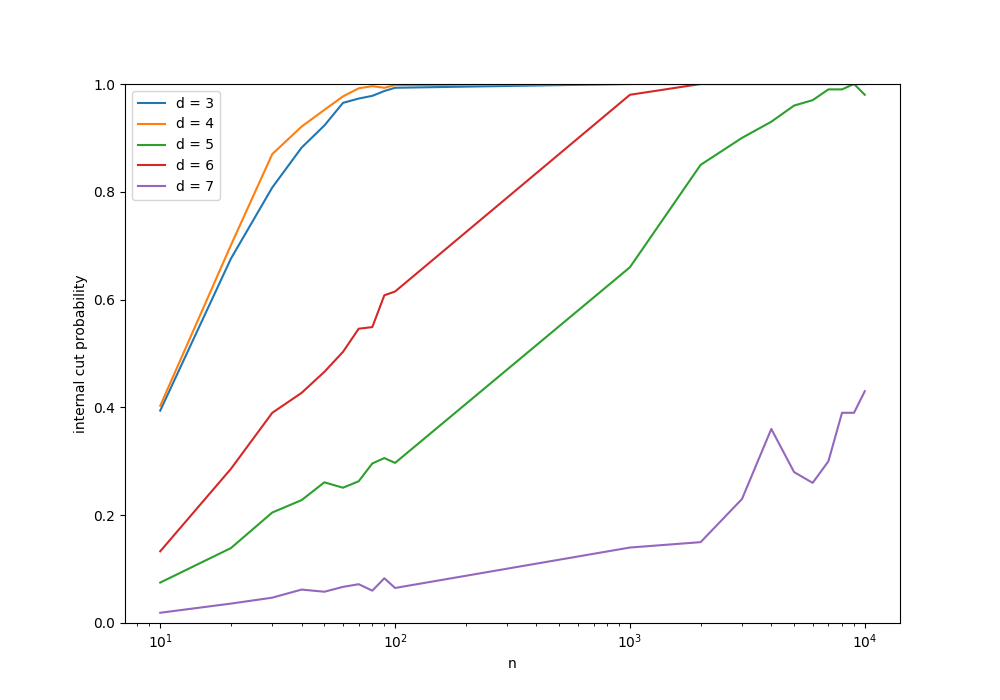}
    \caption{Probability to converge to an internal cut during the swap process.}
    \label{fig_internal_cut_1_by_1}
\end{figure}

This plot suggests that for regular graphs, internal cuts do exist with high probability and can be obtained with one run of the swap process from a random initial configuration. In the specific case of $d = 5$, $n = 10^4$, the thresholding amount of steps after which a core emerges seems to be around $0.3 \cdot n$.

\begin{figure}[h]
    \centering
    \includegraphics[width=0.9\textwidth]{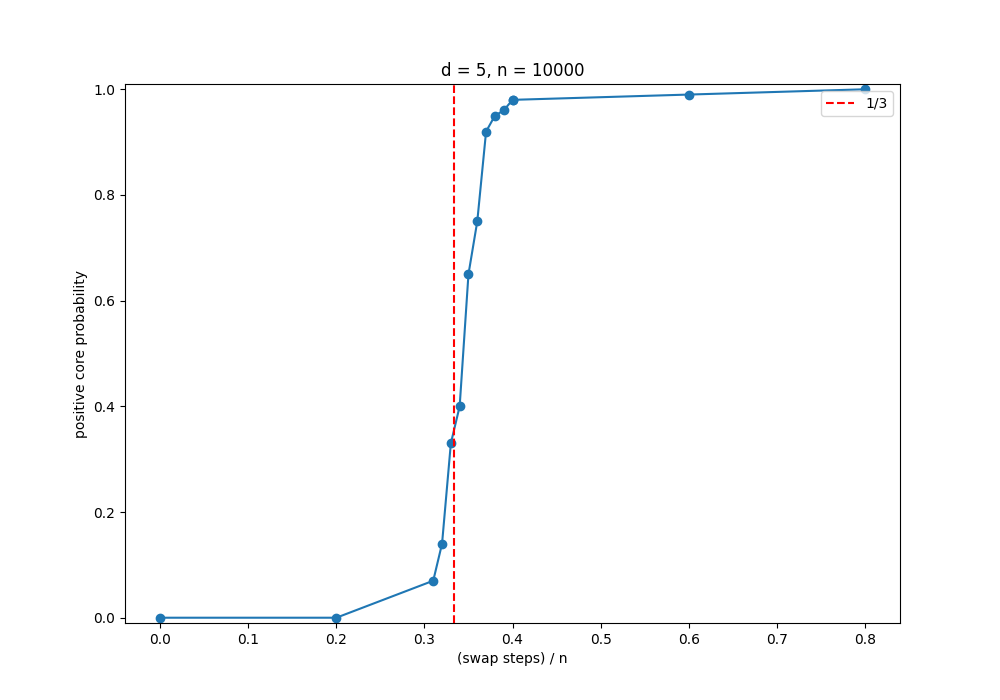}
    \caption{Probability to have a positive core, depending on the number of the swap process steps.}
    \label{fig_proba_red_core_swap}
\end{figure}

However, the one-by-one processes are harder to analyze theoretically than Majority Dynamics-like processes.

Let us introduce Majority Dynamics With Zeros. The graph will now have three parts: positive, negative and neutral. The corresponding state is $x \in \{-1, 0, 1\}^n$. The update rule is the following:
\begin{equation}
    x_i^{new} = clip \left( x_i + \text{sgn} \left( \sum\limits_{j \sim i} x_j \right) \right),
\end{equation}
where 
\begin{equation}
    clip(t) = 
    \begin{cases}
        -1, & t \leq -1 \\
        1, & t \geq 1 \\
        t, & else
    \end{cases}.
\end{equation}

In words, if $i$ is negative and has more positive than negative neighbors, then it becomes neutral. The symmetrical rule holds for positive vertices. If a neutral vertex has more neighbors in the positive (negative) part than in the negative (positive) part, it becomes positive (negative). From now on, we will call Majority Dynamics With Zeros MD0.

One may notice that zero is a fixed point for MD0. However, this fixed point is unstable in the sense that if all $x_i$ except $x_1$ are equal to zero, then eventually all the vertices in the connected component of vertex $1$ will move to the initial part of vertex $1$.

We would like a core to emerge in the (without loss of generality) positive part during MD0. To test this, we generate a random $d$-regular graph and a random initial configuration with empty neutral part, so $x^0 \in \{-1, 1\}^n$. We run MD0 for $k$ steps. Then, we set every neutral vertex to be negative and search for a positive core as before, greedily switching unsatisfied positive vertices. The results for $d = 5$ and $d = 7$ are presented in Figures \ref{fig_core_MD0_5} and \ref{fig_core_MD0_7}. 

\begin{figure}[!h]
    \centering
    \includegraphics[width=0.9\textwidth]{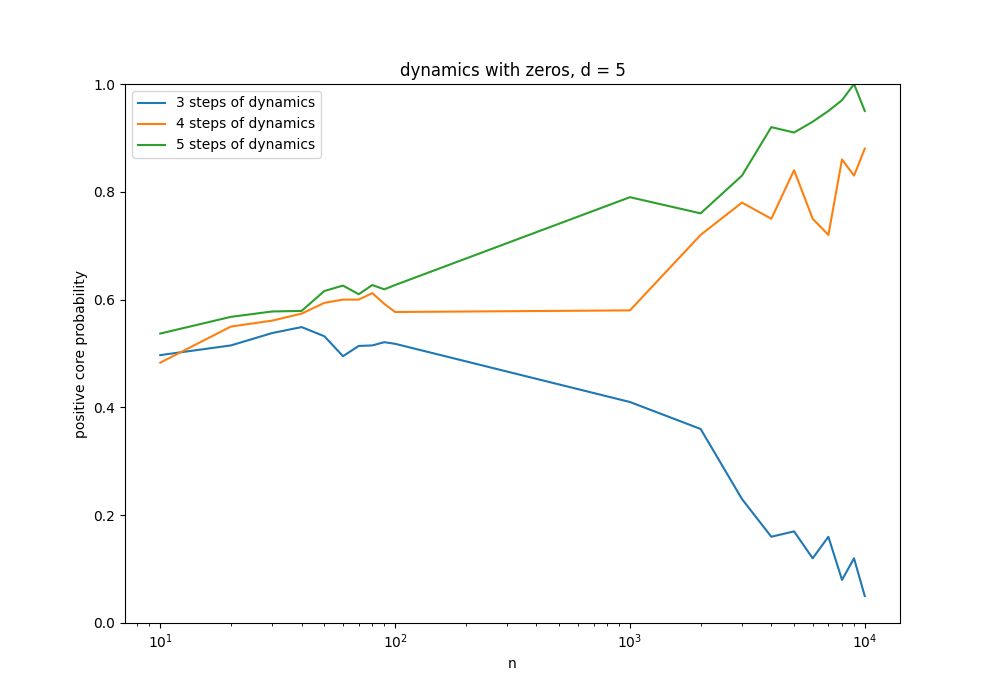}
    \caption{Probability to have a positive core after $k$ steps of MD0 for random $5$-regular graphs.}
    \label{fig_core_MD0_5}
\end{figure}
\begin{figure}[!h]
    \centering
    \includegraphics[width=0.9\textwidth]{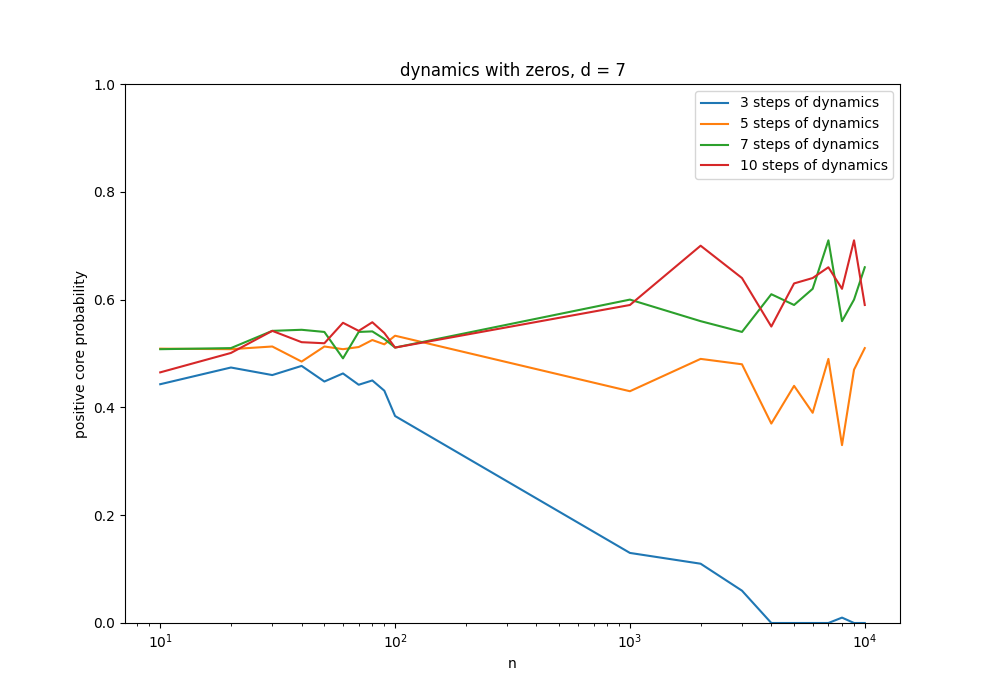}
    \caption{Probability to have a positive core after $k$ steps of MD0 for random $7$-regular graphs.}
    \label{fig_core_MD0_7}
\end{figure}

The plots suggest that in case $d = 5$, a core emerges inside the positive part with high probability after 4 steps of MD0. We believe that this can be proved with Warning Propagation technique \cite{warning_propagation}. We see the following approach: split the vertices into $3^5 = 243$ types depending on their values $(x_i^0, x_i^1, x_i^2, x_i^3, x_i^4)$. Then, search for a nontrivial core inside $\{i \rvert x_i^4 = 1\}$ using these types. The existence of a non-empty core in the positive part with high probability would imply the existence of a non-empty core in the negative part with high probability by symmetry. Then, by the union bound, there are cores in both positive and negative parts, and thus, it would imply that the internal cut exists in random 5-regular graphs with high probability.

\section{Acknowledgements}
I am grateful to Matthew Kwan for setting the problem, providing useful literature, fruitful discussions, text review, mentorship, general encouragement and support.

\end{document}